\documentclass[11pt,a4paper]{amsart}

\usepackage[margin=1.25in,a4paper]{geometry}
\usepackage{lmodern}
\usepackage[utf8]{inputenc}
\usepackage[T1]{fontenc}

\usepackage{microtype}
\usepackage[shortlabels]{enumitem}
\usepackage{svg}

\usepackage[english]{babel}
\usepackage{mathrsfs}
\usepackage{amssymb}

\usepackage{xcolor}
\definecolor{linkcol}{RGB}{0,80,158}
\definecolor{citecol}{RGB}{46,117,120}

\usepackage[colorlinks]{hyperref}
\hypersetup{
    colorlinks=true,
    linkcolor=linkcol,
    urlcolor=linkcol,
    citecolor=citecol,
}
\usepackage[hyperpageref]{backref}
\usepackage[msc-links,alphabetic]{amsrefs}
\usepackage[capitalize,nameinlink,noabbrev]{cleveref}
\crefname{equation}{}{}
\numberwithin{equation}{section}

\theoremstyle{plain}
\newtheorem{theorem}{Theorem}[section]
\newtheorem{lemma}[theorem]{Lemma}
\newtheorem{corollary}[theorem]{Corollary}

\theoremstyle{definition}

\theoremstyle{remark}

\newcommand{\bD}{\mathbb{D}}
\newcommand{\bQ}{\mathbb{Q}}

\newcommand{\bT}{\mathbb{T}}
\newcommand{\bZ}{\mathbb{Z}}

\newcommand{\fA}{\mathfrak{A}}
\newcommand{\abs}[1]{\lvert#1\rvert}

\newcommand{\norm}[1]{\lVert#1\rVert}

\newcommand{\dd}{\mathrm{d}}

\title{Exotic operators on the Hardy spaces of the infinite-dimensional torus}

\author{Viktor Andersson}
\address{Department of Mathematical Sciences, Norwegian University of Science and Technology (NTNU), 7491 Trondheim, Norway}
\email{viktor.andersson@ntnu.no}
\date{\today}

\begin{document}
\begin{abstract}
We quantify the failure of interpolation between Hardy spaces $H^p(\bT^\infty)$ on the infinite-dimensional torus. For any set $A\subseteq[1,\infty]$ such that $A\setminus\{\infty\}$ is closed in $[1,\infty)$, we show that there exists an operator that is densely defined on $H^p(\bT^\infty)$ for all $1\leq p<\infty$ and weak-$\ast$ densely defined on $H^\infty(\bT^\infty)$ that extends to a bounded linear operator on $H^p(\bT^\infty)$ if and only if $p\in A$.
\end{abstract}
\thanks{The author is supported by Grant 354537 of the Research Council of Norway.}
\maketitle

\section{Introduction}\label{section:introduction}

Let $L^p(\bT^\infty)$ be the usual $L^p$-space of the infinite-dimensional torus $\bT^\infty=\bT\times\bT\times\cdots$ equipped with its unique normalized Haar measure $m_\infty$, which is the countable product measure of the normalized arclength measure $m$ on each circle $\bT$. The dual group of $\bT^\infty$ can be identified with the positive rational numbers $\bQ^+$ with the discrete topology by identifying the rational number $q\in\bQ^+$ with the map
$$\chi\mapsto\chi(q)=\prod_j\chi_j^{k_j}$$
with $\chi=(\chi_1,\chi_2,\dots)\in\bT^\infty$ and $k_j$ the unique integer exponent of the $j$'th prime number in the prime factorization of $q$. The Hardy spaces of $\bT^\infty$ are then defined as
$$H^p(\bT^\infty)=\{f\in L^p(\bT^\infty):\widehat f(q)=0\text{ for all }q\in\bQ^+\setminus\bZ^+\},$$
where
$$\widehat f(q)=\int_{\bT^\infty}f(\chi)\overline{\chi(q)}\,\dd m_\infty(\chi)$$
are the Fourier coefficients of $f$ for $q\in\bQ^+$. In the finite-dimensional setting, the standard approach to interpolation between the Hardy spaces $H^p(\bT^d)$ goes through the boundedness of the Riesz projection $L^p(\bT^d)\to H^p(\bT^d)$. In the infinite-dimensional setting, the corresponding Riesz projection is unbounded for all $p\neq2$ (the norm of the Riesz projection on $\bT^d$ is unbounded in $d$, see \cite{hollenbeck_best_2000}), and so one no longer has access to the standard method of interpolation in this setting. One can view this as a failure in duality: if $p\neq2$, then the space $H^p(\bT^\infty)$ is not complemented in $L^p(\bT^\infty)$ (see \cite{bayart_interpolation_2019}*{Corollary 3.7}), and furthermore its dual space $H^p(\bT^\infty)^*$ does not correspond to $H^q(\bT^\infty)$ with $q$ the Hölder conjugate of $p$ (see \cite{saksman_integral_2009}*{Section 3}). This leads naturally to the question of whether interpolation simply fails between the spaces $H^p(\bT^\infty)$, or whether this is simply a failure of existing methods. Our main result tells us that interpolation fails spectacularly; we construct operators with exotic infinite-dimensional behavior---operators for which we can prescribe the $p$ for which one has boundedness with very large freedom.

\begin{theorem}\label{theorem:main-theorem}
    Let $A\subseteq[1,\infty]$ be such that $A\setminus\{\infty\}$ is closed in $[1,\infty)$. Then there exists a linear operator $T$, densely defined on $H^p(\bT^\infty)$ for all $1\leq p<\infty$ and weak-$\ast$ densely defined on $H^\infty(\bT^\infty)$, that extends to a bounded operator on $H^p(\bT^\infty)$ if and only if $p\in A$.
\end{theorem}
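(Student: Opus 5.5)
The strategy is to build $T$ as an infinite direct sum of finite-dimensional blocks acting on disjoint groups of variables. Split the primes into infinitely many disjoint infinite families $P_1,P_2,\dots$, so that $\bT^\infty=\prod_k \bT^{P_k}$. On each factor we place a finite-dimensional operator $T_k$, acting on polynomials in the variables of $P_k$. The blocks are then glued together by a weighted sum, or by a tensor-type construction, so that the norm of $T$ on $H^p$ is essentially $\sup_k \norm{T_k}_{H^p\to H^p}$.

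The mechanism that produces the exotic behavior is tensorization. If $S$ is an operator on $H^p(\bT^d)$ and we take $N$ independent copies $S^{\otimes N}$ on $H^p(\bT^{dN})$, then by Fubini and Minkowski its norm is exactly $\norm{S}_p^N$. This multiplicativity is the standard fact behind the unboundedness of the Riesz projection. Given a compact set $K\subset[1,\infty)$ (or a point), we therefore want "bump" operators $S$ whose norm satisfies
\[
\norm{S}_{p}=1 \text{ for } p\in K, \qquad \norm{S}_p>1 \text{ for } p \text{ away from } K.
\]
Tensor powers then blow up exactly off $K$.

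The natural base operator is a two-variable or one-variable coefficient multiplier or substitution. One example is $S=I+\varepsilon R$, where $R$ is a nilpotent map sending $1\mapsto z$ and killing everything else, restricted to a suitable finite-dimensional space. Another is a composition-type operator $f(z)\mapsto f(az+b)$. The aim is to find one whose $H^p$ norm equals $1$ precisely for $p\ge p_0$, or precisely for $p\le p_0$.

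A concrete candidate is the map $z\mapsto$ (projection onto polynomials of degree $\le 1$), or an operator of the form $f\mapsto f(0)+c\,\widehat f(1)z$. Its norm on $H^p$ as a function of $p$ is monotone, and it crosses $1$ at a threshold $p_0$ that we can tune via $c$. With half-line operators $U_{p_0}$ (norm $1$ iff $p\ge p_0$) and $L_{p_0}$ (norm $1$ iff $p\le p_0$), we can handle a general closed set as follows:
\begin{itemize}
\item Write $[1,\infty)\setminus A$ as a countable union of open intervals $(a_j,b_j)$, possibly including $[1,b_j)$.
\item For each $j$, take a block that is bounded outside $(a_j,b_j)$ and has norm $>1$ inside it. Tensor powers $U^{\otimes N}\otimes L^{\otimes N}$ on fresh variables give norm growth $\to\infty$ inside, and norm $\le 1$ outside.
\item Take $T=\bigoplus_j\bigoplus_N$ of these blocks on disjoint variables. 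The direct sum is realized as multiplication by different new variables $w_{j,N}$ to keep the pieces orthogonal in the right sense; alternatively, use tensor products $\bigotimes_j$ of operators with norm $\le1$ on $A$.
\end{itemize}
The point $\infty$ is handled by one extra block that is bounded on all $p<\infty$ but not on $H^\infty$, or conversely. An example is a Riesz-type projection in a tensor-power family whose norm tends to $1$ as $p\to\infty$ but not at $\infty$; we add this block or not according to whether $\infty\in A$. Density holds because $T$ is defined on polynomials, which are dense in $H^p$ and weak-$\ast$ dense in $H^\infty$.

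The main obstacle is twofold. First, we must realize the one-parameter operators with norm exactly $1$ on a half-line $[p_0,\infty)$ and strictly greater than $1$ below, with the crossing exactly at $p_0$. This requires explicit $H^p$ computations for low-degree polynomials, e.g.\ $\norm{1+cz}_p$ versus $\max$ quantities, using the known sharp contractive inequalities (Weissler-type hypercontractivity: $f(z)\mapsto f(rz)$ is contractive $H^p\to H^q$ iff $r\le\sqrt{p/q}$). Second, we must combine countably many blocks so that boundedness on $A$ is uniform. The cleanest route I would try first is to use Weissler's sharp hypercontractivity constants as the source of thresholds, and to build each block as a composition of a dilation with an embedding between exponents, so that contractivity holds precisely on a prescribed interval. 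The closedness of $A$ ensures that each point of $A^c$ lies in an open interval where some block's norm exceeds $1$, and tensor powers then force unboundedness.
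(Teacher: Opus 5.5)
Your proof has a genuine gap at the step that is meant to produce, for each interval $(a_j,b_j)$, a block that is bounded off $(a_j,b_j)$ and blows up inside it. Under your normalization, $\norm{U}_{p\to p}\ge 1$ and $\norm{L}_{p\to p}\ge 1$ for every $p$. Hence $\norm{U^{\otimes N}\otimes L^{\otimes N}}_{p\to p}=\norm{U}_{p\to p}^N\norm{L}_{p\to p}^N\ge\max\{\norm{U}_{p\to p},\norm{L}_{p\to p}\}^N$, and this tends to infinity wherever \emph{either} factor exceeds $1$. If you take $U=U_{b_j}$ and $L=L_{a_j}$, the block blows up at every $p$; on $[b_j,\infty)$, for example, $L_{a_j}$ has norm $>1$. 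If you take $U=U_{a_j}$ and $L=L_{b_j}$, the block is bounded exactly on $[a_j,b_j]$, which is the opposite of what you need. Weighted sums or tensor products over disjoint variables behave the same way: testing on functions of one block's variables isolates that block, so a $p$ can be good for the whole only if it is good for every piece. From half-line operators you can therefore only realize sets $A$ that are intervals, and the two-point sets $A=\{p_1,p_2\}$ of the corollary are already out of reach. What is missing is a single operator whose norm profile has a genuine bump, $\norm{S}_{p'\to p'}>\sup_{p\notin I}\norm{S}_{p\to p}$, which you would normalize by its supremum off $I$ before tensorizing. The paper obtains this from the rank-one operator $T_{n,m}f=2^{-n}\langle f,\Phi_n\rangle\varphi^m$. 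Its norm $\norm{\varphi}_{mp}^m/\norm{\varphi}_p^n$ is an increasing factor times a decreasing factor that is strictly less than $1$. The real content is choosing $n,m$ to balance these two exponential rates, using strict monotonicity of $G$, strict convexity of $r\mapsto G(1/r)$, the asymptotics of $G$, and unimodality of $F_{n,m}$ for $m\ge m_0$.

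Two further steps also fail. First, your half-line operators are never shown to exist. Dilations $f\mapsto f(rz)$ are contractions on every $H^p$, so composing them with embeddings gives no threshold on a fixed $H^p$. The claimed monotone profile of $f\mapsto\widehat f(0)+c\widehat f(1)z$ is unproven, and it fails for $c=1$. This projection is contractive on $H^2$. But $f=(1+z)^2$ gives $\norm{1+2z}_1=2\sum_k\binom{1/2}{k}^2 4^{-k}>2=\norm{f}_1$. And $f=(z+a)/(1+az)$ with $0<a<1$ gives $\norm{a+(1-a^2)z}_\infty=1+a-a^2>1=\norm{f}_\infty$. So its set of contractivity is not a half-line. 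Second, the endpoint $\infty$ cannot be handled by an operator whose norm stays bounded, or tends to $1$, as $p\to\infty$. For $f\in H^\infty(\bT^\infty)$ one has $\norm{Sf}_\infty=\lim_{p\to\infty}\norm{Sf}_p\le\liminf_{p\to\infty}\norm{S}_{p\to p}\norm{f}_\infty$. So an operator bounded on every $H^p$ with $p<\infty$ but not on $H^\infty$ must satisfy $\norm{S}_{p\to p}\to\infty$. Tensor powers would in any case blow up at every finite $p$ where the norm exceeds $1$, and a Riesz projection is simply the identity on $H^p$. The paper uses a genuinely infinite-dimensional ingredient here: a Volterra operator $V_g$ with $g\in H^\infty(\bD)$, bounded on $H^p(\bD)$ for $1\le p<\infty$ but unbounded on $H^\infty(\bD)$, added to $T$ as $V_g\mathfrak{A}_1$.
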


The requirement that $A\setminus\{\infty\}$ be closed in \cref{theorem:main-theorem} is necessary for our construction to work: our method is based on writing $[1,\infty)\setminus A$ as a countable union of relatively open subintervals of $[1,\infty)$. We are unaware of whether this completely characterizes all possible sets $A$, and find it an interesting question to determine precisely what sets $A$ can occur.

\cref{theorem:main-theorem} provides a significant generalization of a result of Brevig, Ortega-Cerdà and Seip, who proved the result in the special case of $A=\{2,4,\dots,2(n+1)\}$ for a fixed positive integer $n$ \cite{brevig_idempotent_2021}*{Theorem 1.4}. Motivated by the local embedding problem for the Hardy spaces of Dirichlet series (see \cite{brevig_idempotent_2021}*{Section 4} for details), they asked also whether the case when $A=2\bZ^+\cup\{\infty\}$ can occur \cite{brevig_idempotent_2021}*{Problem 4.1}. As this set satisfies the assumption of \cref{theorem:main-theorem}, we obtain a positive answer to their question. Taking $A=\{p_1,p_2\}$ for any $1\leq p_1<p_2\leq\infty$, we obtain also the following corollary, which improves on \cite{bayart_interpolation_2019}*{Theorems 3.3 and 3.5} of Bayart and Masty\l o.

\begin{corollary}\label{corollary:not-interpolation-space}
    $H^p(\bT^\infty)$ is not an interpolation space for the pair $(H^{p_1}(\bT^\infty),H^{p_2}(\bT^\infty))$ for any $1\leq p_1<p<p_2\leq\infty$.
\end{corollary}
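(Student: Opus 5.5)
We deduce the corollary from \cref{theorem:main-theorem} by applying it to the two-point set $A=\{p_1,p_2\}$. This set is admissible: $A\setminus\{\infty\}$ is either $\{p_1,p_2\}$ or $\{p_1\}$, and a finite subset of $[1,\infty)$ is closed. The theorem therefore provides a linear operator $T$ with three properties. It is densely defined on every $H^q(\bT^\infty)$ with $1\le q<\infty$, and weak-$\ast$ densely defined on $H^\infty(\bT^\infty)$. It extends boundedly to $H^{p_1}(\bT^\infty)$ and to $H^{p_2}(\bT^\infty)$. It does not extend boundedly to $H^p(\bT^\infty)$ for any $p\notin\{p_1,p_2\}$, in particular for any $p_1<p<p_2$.

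Next we check that $T$ is an admissible operator for the pair $(H^{p_1},H^{p_2})$. Recall that $X$ is an interpolation space for a compatible couple $(X_0,X_1)$ with $X_0\cap X_1\subseteq X\subseteq X_0+X_1$ if every linear operator $S\colon X_0+X_1\to X_0+X_1$ that restricts to bounded maps $X_j\to X_j$ also maps $X$ boundedly into $X$.

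First, $H^{p_2}\subseteq H^p\subseteq H^{p_1}$ continuously, since $m_\infty$ is a probability measure. Hence $X_0\cap X_1=H^{p_2}$ and $X_0+X_1=H^{p_1}$, with equivalent norms, and $H^p$ sits between them.

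Second, let $\widetilde T$ be the bounded extension of $T$ to $H^{p_1}=X_0+X_1$. We must show that $\widetilde T$ maps $H^{p_2}$ boundedly into itself and coincides there with the extension given by the theorem. In the construction, $T$ is defined on a common core, namely the polynomials, or more generally finite linear combinations of characters $\chi\mapsto\chi(n)$. This core is dense in $H^{p_2}$ in norm when $p_2<\infty$, and weak-$\ast$ dense when $p_2=\infty$. Let $T_2$ denote the bounded extension of $T$ to $H^{p_2}$, and take $f\in H^{p_2}$.
\begin{itemize}
\item If $p_2<\infty$, approximate $f$ by polynomials in $H^{p_2}$-norm. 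The approximation also holds in $H^{p_1}$-norm, so $\widetilde T f=T_2f$.
\item If $p_2=\infty$, the weak-$\ast$ continuity of $T_2$ must be matched with continuity of $\widetilde T$. Both extensions are continuous for convergence of Fourier coefficients on bounded sets, so again $\widetilde T f=T_2f$. Equivalently, the coefficients of $Tf$ are determined by those of $f$, and the two extensions agree coefficientwise.
\end{itemize}
Thus $\widetilde T$ is bounded on both endpoint spaces and is admissible for the couple.

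Finally, suppose $H^p$ were an interpolation space for the pair. Then $\widetilde T$ would map $H^p$ boundedly into itself. Its restriction to the dense core would coincide with $T$, so $T$ would extend to a bounded operator on $H^p$, contradicting the choice $A=\{p_1,p_2\}$. Hence $H^p(\bT^\infty)$ is not an interpolation space for any $1\le p_1<p<p_2\le\infty$.

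The only delicate point is the compatibility of extensions in the case $p_2=\infty$, where density is only weak-$\ast$. I expect this to follow directly from how the operator in \cref{theorem:main-theorem} is defined, as a coefficient (multiplier-type) operator on polynomials.
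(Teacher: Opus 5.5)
Your proposal is correct and follows the paper, which obtains the corollary simply by taking $A=\{p_1,p_2\}$ in \cref{theorem:main-theorem}. Your identification $X_0\cap X_1=H^{p_2}$, $X_0+X_1=H^{p_1}$ and your check that the $H^{p_1}$-extension is an admissible operator just make explicit what the paper leaves implicit.

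The $p_2=\infty$ compatibility you flag does hold, though not for the reason you suggest: the operator is not a Fourier multiplier. The actual reason is that, when $\infty\in A$, the paper's operator is defined on all of $H^{p_1}(\bT^\infty)$ and on all of $H^\infty(\bT^\infty)$ by one and the same series, absolutely convergent in both norms, so the two extensions coincide on $H^\infty(\bT^\infty)$ automatically.
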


This result is new for any $1\leq p_1<p_2\leq\infty$ with the exception of when $p_1$ and $p_2$ are two consecutive even integers as a consequence of \cite{brevig_idempotent_2021}*{Theorem 1.4}.

We obtain also a quantitative consequence for interpolation between Hardy spaces of finite-dimensional tori: the best interpolation constant for $H^p(\bT^d)$ grows exponentially in $d$ (see \cref{theorem:asymptotic-of-finite-interpolation-constant} below).

Our construction is done in three main steps. We start by fixing an arbitrary relatively open and bounded subinterval $I$ of $[1,\infty)$ with endpoints $p_1<p_2$. For any $p'\in I$, we then construct an operator $T$ with the property that its norm on $H^{p'}(\bT^\infty)$ is strictly greater than the supremum of its norm on $H^p(\bT^\infty)$ for all $p\in[1,\infty]\setminus I$. This operator will be of the form
$$Tf(\chi)=\frac{1}{2^n}\langle f,\Phi_n\rangle(\chi_1+\chi_2)^m$$
where $\langle\,\cdot\,,\,\cdot\,\rangle$ denotes the usual integral pairing on $\bT^\infty$, and
$$\Phi_n(\chi)=(\chi_1+\chi_2)(\chi_3+\chi_4)\cdots(\chi_{2n-1}+\chi_{2n}).$$
In particular, the key part of the argument comes from the fact that we can explicitly compute the norm of $T$ on $H^p(\bT^\infty)$ as
$$\norm{T}_{p\to p}=\frac{\norm{\chi_1+\chi_2}_{mp}^m}{\norm{\chi_1+\chi_2}_p^n}$$
as a consequence of \cref{lemma:dual-norm} below. This quantity can be further expressed in terms of the beta function, and so by a special function argument we show that we can choose integers $n$ and $m$ so that the norm of $T$ satisfies the desired inequalities (compare with \cref{fig:plot-of-norm} below).

The second step of the proof consists of a standard infinite tensorization argument by exploiting the fact that functions on $\bT^\infty$ have infinitely many variables. This allows us to scale the norm in a way where it blows up exponentially for $p$ in a neighborhood of $p'$ but stays bounded for $p\in[1,\infty]\setminus I$. By considering an appropriate weighted infinite sum, we then obtain an operator that is bounded for $p\in[1,\infty]\setminus I$ but unbounded for $p$ in a neighborhood of $p'$.

For the last step, we use the second step together with a compactness argument to construct an operator that is bounded on $H^p(\bT^\infty)$ for $p\in A\cup\{\infty\}$ and unbounded for all other $p$. To deal with the endpoint case where $\infty\notin A$, we use a result of Anderson, Jovovic and Smith \cite{anderson_integral_2014} to find a Volterra operator on the Hardy spaces of the disk $\bD$ that maps the polynomials into $H^\infty(\bD)$, that is bounded on $H^p(\bD)$ for $1\leq p<\infty$, and that is unbounded on $H^\infty(\bD)$. By transferring this operator to the infinite polytorus in the natural way and adding it to our previously constructed operator, we obtain our result.

\begin{figure}[h]
    \centering
    \includegraphics[width=0.6\linewidth]{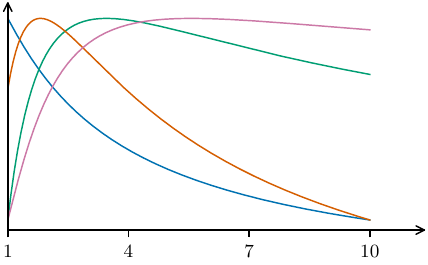}
    \caption{Plots of $p\mapsto\norm{\chi_1+\chi_2}_{mp}^m/\norm{\chi_1+\chi_2}_p^n$ for \textcolor[HTML]{0072B2}{$(n,m)=(3,2)$}, \textcolor[HTML]{009E73}{$(n,m)=(3,9)$}, \textcolor[HTML]{D55E00}{$(n,m)=(5,15)$}, and \textcolor[HTML]{CC79A7}{$(n,m)=(6,1000)$}, scaled to all take values in the same range.}
    \label{fig:plot-of-norm}
\end{figure}

\subsection*{Organization.}
The remainder of the paper is split into two sections. The first of these, \cref{section:proof-of-main-theorem}, is dedicated to the proof of \cref{theorem:main-theorem}, and in the second one, \cref{section:finite-dimensional-consequences}, we apply our methods to derive asymptotics for the best interpolation constants between Hardy spaces on finite-dimensional tori.

\section{Proof of \texorpdfstring{\cref{theorem:main-theorem}}{Theorem \ref{theorem:main-theorem}}}\label{section:proof-of-main-theorem}

Our argument will be based on studying the norm of the polynomial
$$\varphi(\chi)=\chi_1+\chi_2$$
in two variables. In particular, a key role will be played by the well-known fact that its norm can be explicitly computed in terms of the beta function
$$B(x,y)=\int_0^1 t^{x-1}(1-t)^{y-1}\,\dd t.$$

\begin{lemma}\label{lemma:norm-of-phi}
    The function $\varphi(\chi)=\chi_1+\chi_2$ has norm
    $$\norm{\varphi}_p=2\left(\frac{1}{\pi}B\left(\frac{1}{2},\frac{p+1}{2}\right)\right)^{1/p}$$
    for $1\leq p<\infty$ and
    $$\norm{\varphi}_\infty=2.$$
\end{lemma}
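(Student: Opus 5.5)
The computation reduces to a single integral over the circle, which we then turn into a beta integral. Since $m_\infty$ is a product measure and $\varphi$ depends only on $\chi_1,\chi_2$, we have
$$\norm{\varphi}_p^p=\int_{\bT^2}\abs{\chi_1+\chi_2}^p\,\dd m(\chi_1)\,\dd m(\chi_2).$$
Write $\abs{\chi_1+\chi_2}=\abs{1+\overline{\chi_1}\chi_2}$. For each fixed $\chi_1$, the map $\chi_2\mapsto\overline{\chi_1}\chi_2$ preserves $m$. Hence the double integral collapses to
$$\norm{\varphi}_p^p=\int_\bT\abs{1+z}^p\,\dd m(z)=\frac{1}{2\pi}\int_{-\pi}^{\pi}\abs{1+e^{i\theta}}^p\,\dd\theta.$$

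\textbf{Case $p=\infty$.} Here $\abs{1+z}\leq 2$ with equality at $z=1$. Since $z\mapsto\abs{1+z}$ is continuous, the essential supremum equals the supremum, so $\norm{\varphi}_\infty=2$.

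\textbf{Case $1\leq p<\infty$.} Use $\abs{1+e^{i\theta}}=2\abs{\cos(\theta/2)}$. By symmetry and the substitution $u=\theta/2$,
$$\norm{\varphi}_p^p=\frac{2^p}{2\pi}\int_{-\pi}^{\pi}\abs{\cos(\theta/2)}^p\,\dd\theta=\frac{2^p}{\pi}\int_{-\pi/2}^{\pi/2}\cos^p u\,\dd u=\frac{2^{p+1}}{\pi}\int_0^{\pi/2}\cos^p u\,\dd u.$$

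It remains to convert the last integral into a beta function. Substitute $t=\sin^2u$, so that $\dd t=2\sin u\cos u\,\dd u$ and $\cos u=(1-t)^{1/2}$. This gives the standard identity
$$\int_0^{\pi/2}\cos^p u\,\dd u=\frac12\int_0^1 t^{-1/2}(1-t)^{(p-1)/2}\,\dd t=\frac12 B\left(\frac12,\frac{p+1}{2}\right).$$
Therefore
$$\norm{\varphi}_p^p=\frac{2^p}{\pi}B\left(\frac12,\frac{p+1}{2}\right),$$
and taking $p$-th roots yields the claimed formula.

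None of these steps is a real obstacle. The only care needed is with the constants: the factor $2$ from the half-angle substitution, the factor $2$ from symmetry, and the factor $\frac12$ from the beta substitution must combine to give $2^p/\pi$.
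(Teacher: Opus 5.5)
Your proof is correct and follows the same route as the paper: Fubini and rotation invariance reduce the norm to $\int_\bT\abs{1+z}^p\,\dd m(z)$, then the half-angle identity, symmetry, and the substitution $t=\sin^2u$ give $\frac{2^p}{\pi}B\left(\frac12,\frac{p+1}{2}\right)$. Your constant bookkeeping is right, and your remark on the $p=\infty$ case (a continuous function whose supremum $2$ is approached on open sets of positive Haar measure) covers the step the paper calls ``clear''.
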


\begin{proof}
    The case when $p=\infty$ is clear, so take $1\leq p<\infty$. Using Fubini's theorem, the rotational invariance of $m$ and symmetry, we compute
    $$\norm{\varphi}_p^p=\int_\bT\int_\bT\abs{\chi_1+\chi_2}^p\,\dd m(\chi_1)\,\dd m(\chi_2)=\frac{1}{2\pi}\int_0^{2\pi}\abs{1+e^{i\theta}}^p\,\dd\theta=\frac{2^p}{\pi}\int_0^\pi\cos\left(\frac{\theta}{2}\right)^p\,\dd\theta.$$
    From here the result follows after a change of variables.
\end{proof}

At several points in our arguments, we will use that we can identify $H^p(\bT^d)$ with a subspace of $H^p(\bT^\infty)$, namely the subspace
$$H^p(\bT^d)=\{f\in H^p(\bT^\infty):\widehat f(n)=0\text{ if }p_j\mid n\text{ for some }j\geq d+1\}.$$
That is, $H^p(\bT^d)$ is the subspace of $H^p(\bT^\infty)$ consisting of those functions that only depend on the first $d$ variables. The projections $\fA_d:H^p(\bT^\infty)\to H^p(\bT^d)$ obtained by formally setting $\widehat{\fA_d f}(n)=0$ if $p_j\mid n$ for some $j\geq d+1$ and $\widehat{\fA_d f}(n)=\widehat f(n)$ otherwise are called \emph{die Abschnitte}, and define contractive projections onto $H^p(\bT^d)$, which is easily seen by writing
$$\fA_df(\chi)=\int_{\bT^\infty}f(\chi_1,\dots,\chi_d,\eta_1,\eta_2,\dots)\,\dd m_\infty(\eta).$$
Die Abschnitte will be relevant to our construction, as they allow us to isolate operators that only depend on the first $d$ variables, or more precisely, on the Fourier coefficients supported on the integers generated by the first $d$ primes.

To construct our operator, we will next consider the polynomial $\Phi_n$ defined in the introduction. This polynomial defines a bounded linear functional on $H^p(\bT^\infty)$ for each $1\leq p\leq\infty$ by the formula $f\mapsto\langle f,\Phi_n\rangle$, and we will write $\norm{\Phi_n}_{H^p(\bT^\infty)^*}$ to denote the norm of this functional. The main property we need of this functional is that we can compute its norm explicitly.

\begin{lemma}\label{lemma:dual-norm}
    For any positive integer $n$ and any $1\leq p\leq\infty$, it holds that
    $$\norm{\Phi_n}_{H^p(\bT^\infty)^*}=\frac{2^n}{\norm{\varphi}_p^n}.$$
\end{lemma}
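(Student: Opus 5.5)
Two inequalities give the claimed value of $\norm{\Phi_n}_{H^p(\bT^\infty)^*}$.

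\textbf{Lower bound.} We test the functional on $f=\Phi_n$ itself. Its value is
$$\langle\Phi_n,\Phi_n\rangle=\norm{\Phi_n}_2^2=\norm{\varphi}_2^{2n}=2^n.$$
The factors $\chi_{2j-1}+\chi_{2j}$ of $\Phi_n$ depend on disjoint sets of variables. Since $m_\infty$ is a product measure, Fubini gives $\norm{\Phi_n}_p=\norm{\varphi}_p^n$ for every $1\le p\le\infty$. Hence the dual norm is at least $2^n/\norm{\varphi}_p^n$. For $p=\infty$ the same test function works, with $\norm{\Phi_n}_\infty=2^n$.

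\textbf{Upper bound.} We must show $\abs{\langle f,\Phi_n\rangle}\le 2^n\norm{f}_p/\norm{\varphi}_p^n$ for all $f\in H^p(\bT^\infty)$.

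First, apply the Abschnitt $\fA_{2n}$, which is contractive, to reduce to $f\in H^p(\bT^{2n})$. Then take $f$ to be a polynomial, by density (weak-$\ast$ density when $p=\infty$, or directly since $\Phi_n$ is a polynomial).

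Next, symmetrize within each pair of variables. For $f$ analytic, $\langle f,\Phi_n\rangle$ only involves the coefficients of the monomials $\chi_{i_1}\cdots\chi_{i_n}$ with $i_j\in\{2j-1,2j\}$. Define the averaging operator
$$Pf(\chi)=\int_{\bT^n} f(\lambda_1\chi_1,\lambda_1\chi_2,\dots,\lambda_n\chi_{2n-1},\lambda_n\chi_{2n})\,\overline{\lambda_1\cdots\lambda_n}\,\dd m(\lambda).$$
It is a contraction on $L^p$ by rotation invariance. It keeps exactly the part of $f$ that is homogeneous of degree one in each pair $(\chi_{2j-1},\chi_{2j})$, and it leaves $\langle f,\Phi_n\rangle$ unchanged. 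So we may assume $f$ is multilinear of the form
$$f=\sum_{i}c_i\,\chi_{i_1}\cdots\chi_{i_n}.$$

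Each such monomial factors as a product over $j$ of either $\chi_{2j-1}$ or $\chi_{2j}$. Averaging further over the swaps $\chi_{2j-1}\leftrightarrow\chi_{2j}$ preserves both the pairing and, by convexity, does not increase the norm. This reduces the problem to showing
$$\abs{\langle f,\Phi_n\rangle}\le (2/\norm{\varphi}_p)^n\norm{f}_p$$
for $f$ symmetric multilinear in the pairs.

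\textbf{Main obstacle.} Symmetrization alone does not force $f$ to be a multiple of $\Phi_n$ once $n\ge2$, since sums of products of degree-one factors remain. For $n=1$ it does: the only symmetric degree-one function of $(\chi_1,\chi_2)$ is a multiple of $\varphi$, which gives the bound with equality. I would handle general $n$ by induction, applying the $n=1$ estimate one pair at a time via Fubini. Write
$$\langle f,\Phi_n\rangle=\int\Bigl\langle f(\cdot,\chi''),\chi_1+\chi_2\Bigr\rangle_{\chi_1,\chi_2}\,\overline{\Phi'(\chi'')}\,\dd m(\chi'').$$
After projecting with $P$ and symmetrizing in the first pair, $f(\cdot,\chi'')$ becomes $\varphi(\chi_1,\chi_2)\,g(\chi'')$ for the function $g$ given by pairing $f$ against $\varphi$ in the first pair, up to a normalization. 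Its $L^p$ norm satisfies
$$\norm{\varphi}_p\norm{g}_p\le\norm{f}_p,$$
because $\norm{\varphi g}_p=\norm{\varphi}_p\norm{g}_p$ for functions in separate variables. Iterating over the $n$ pairs yields $\norm{\varphi}_p^n$ in the denominator.

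The key check is that the projection onto the symmetric degree-one part in one pair is contractive on $L^p$ in mixed variables. I expect this to be the delicate step, and would verify it via the group averaging above, which involves only rotations and swaps.
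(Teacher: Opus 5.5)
Your proof is correct and is essentially the paper's. The paper cites Brevig's Lemma 5 for a single pair, which you prove by rotation-and-swap averaging, and then tensorizes over the $n$ disjoint pairs after applying $\fA_{2n}$, which is exactly your pair-by-pair Fubini induction; your lower bound from testing on $\Phi_n$ is also valid.

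One correction: the ``main obstacle'' you describe does not exist. The swap group $(\bZ/2\bZ)^n$ acts transitively on the $2^n$ monomials $\chi_{i_1}\cdots\chi_{i_n}$, so a multilinear $f$ invariant under every pair swap is already $c\,\Phi_n$. Hence $\abs{\langle f,\Phi_n\rangle}=2^n\abs{c}=2^n\norm{f}_p/\norm{\varphi}_p^n$, and your averaging reduction proves the upper bound for all $n$ at once, without the induction.
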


\begin{proof}
    For a positive integer $j$, let $\varphi_j(\chi)=\chi_{2j-1}+\chi_{2j}$. By \cite{brevig_linear_2019}*{Lemma 5} it is the case that
    $$\norm{\varphi_j}_{H^p(\bT^\infty)^*}=\frac{2}{\norm{\varphi}_p}.$$
    Observing that the functional $f\mapsto\langle f,\Phi_n\rangle$ is the tensor product of the functionals $f\mapsto\langle f,\varphi_j\rangle$ acting on the variables $\chi_{2j-1},\chi_{2j}$ for $j=1,\dots,n$ composed with $\fA_{2n}$, we obtain that
    \[\norm{\Phi_n}_{H^p(\bT^\infty)^*}=\norm{\varphi_1}_{H^p(\bT^\infty)^*}\cdots\norm{\varphi_n}_{H^p(\bT^\infty)^*}=\frac{2^n}{\norm{\varphi}_p^n}.\qedhere\]
\end{proof}

With this, we can now define the family of operators that will be central to our construction. For positive integers $n$ and $m$, consider the operator $T_{n,m}$ defined by
$$T_{n,m}f=\frac{1}{2^n}\langle f,\Phi_n\rangle\varphi^m.$$
Then $T_{n,m}$ is a bounded operator on $H^p(\bT^\infty)$ for all $1\leq p\leq\infty$, and we can compute its norm as
$$\norm{T_{n,m}}_{p\to p}=\frac{1}{2^n}\norm{\Phi_n}_{H^p(\bT^\infty)^*}\norm{\varphi^m}_p=\frac{\norm{\varphi}_{mp}^m}{\norm{\varphi}_p^n}.$$
Consider now the function $G$ defined by
$$G(p)=\frac{1}{p}\log\left(\frac{1}{\pi}B\left(\frac{1}{2},\frac{p+1}{2}\right)\right)$$
for $1\leq p<\infty$. By \cref{lemma:norm-of-phi} it is then the case that
$$\log\norm{T_{n,m}}_{p\to p}+(n-m)\log 2=mG(mp)-nG(p)$$
for $1\leq p<\infty$. We shall therefore be concerned with the properties of $G$. We start with the observation that $G$ is a strictly increasing function.

\begin{lemma}\label{lemma:G-strictly-increasing}
    The function $G$ is strictly increasing.
\end{lemma}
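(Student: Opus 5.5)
The plan is to avoid any special-function estimate and instead read $G$ back as a norm. By \cref{lemma:norm-of-phi}, for $1\leq p<\infty$ we have
$$\frac{1}{\pi}B\left(\frac{1}{2},\frac{p+1}{2}\right)=\left(\frac{\norm{\varphi}_p}{2}\right)^p,$$
and therefore
$$G(p)=\frac{1}{p}\log\left(\frac{\norm{\varphi}_p}{2}\right)^p=\log\frac{\norm{\varphi}_p}{2}.$$
Since $\log$ is strictly increasing, it suffices to show that $p\mapsto\norm{\varphi}_p$ is strictly increasing on $[1,\infty)$.

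For this, I would use the fact that $m_\infty$ (equivalently $m\times m$ on $\bT^2$) is a probability measure. For $1\leq p<q<\infty$, apply Hölder's inequality (equivalently Jensen's inequality for the strictly convex function $t\mapsto t^{q/p}$) to $\abs{\varphi}^p$. This gives
$$\norm{\varphi}_p^p=\int\abs{\varphi}^p\cdot 1\,\dd m_\infty\leq\left(\int\abs{\varphi}^q\,\dd m_\infty\right)^{p/q},$$
that is, $\norm{\varphi}_p\leq\norm{\varphi}_q$.

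For strictness, recall that equality in Hölder's inequality with exponents $q/p$ and its conjugate forces $\abs{\varphi}^p$ to be a.e.\ proportional to the constant $1$, i.e.\ $\abs{\varphi}$ would be a.e.\ constant. The final step is to check that this fails. Using the rotational reduction from the proof of \cref{lemma:norm-of-phi}, $\abs{\varphi}$ has the same distribution as $\abs{1+e^{i\theta}}=2\abs{\cos(\theta/2)}$ with $\theta$ uniform on $[0,2\pi)$. This function takes all values in $[0,2]$ on sets of positive measure, so it is not a.e.\ constant. Hence the inequality is strict, and $G$ is strictly increasing.

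There is no real obstacle here. The only point needing care is the equality case of Hölder/Jensen, which is standard (strict convexity of $t\mapsto t^{q/p}$ for $q>p$).
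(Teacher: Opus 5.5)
Your proof is correct, and it takes a genuinely different route from the paper. Via the identity $G(p)=\log(\norm{\varphi}_p/2)$, you reduce the claim to the monotonicity of $L^p$ norms on a probability space. Strictness then comes from the equality case of Hölder/Jensen, because $\abs{\varphi}$ is not a.e.\ constant. One small wording fix: each level set of $\abs{\varphi}$ is null, so say instead that its essential range is $[0,2]$, or that $\{\abs{\varphi}<1\}$ and $\{\abs{\varphi}>1\}$ both have positive measure.

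The paper works with special functions instead. It computes $G'$ via the digamma function, sets $H(p)=p^2G'(p)$, and checks $H(1)=\log\pi-1>0$. It then shows $H'(p)=\frac{p}{4}\bigl(\psi_1(\frac{p}{2}+\frac12)-\psi_1(\frac{p}{2}+1)\bigr)>0$ using that the trigamma function is decreasing.

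Your argument is shorter, needs no special-function input, and explains conceptually why $G$ increases. The paper's argument proves more than the statement: it gives $G'(p)>0$ for every $p$ and $H'(p)>0$. Both facts are used later in the proof of \cref{lemma:existence-of-m}, to divide by $G'$ and to show $g(p)>-2$. Strict monotonicity alone does not rule out isolated zeros of $G'$. To make your approach carry that load, you could add the identity $p^2G'(p)=\int\rho_p\log\rho_p\,\dd m_\infty$ with $\rho_p=\abs{\varphi}^p/\norm{\varphi}_p^p$. This is a relative entropy, so it is strictly positive because $\rho_p$ is not a.e.\ equal to $1$. The positivity of $H'$ would still need a separate argument.
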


\begin{proof}
    Differentiating $G$ by using the identity
    $$\partial_y B(x,y)=B(x,y)(\psi(y)-\psi(x+y))$$
    where $\psi=(\log\Gamma)'$ is the digamma function, we can compute
    $$G'(p)=\frac{1}{2p}\left(\psi\left(\frac{p}{2}+\frac{1}{2}\right)-\psi\left(\frac{p}{2}+1\right)\right)-\frac{1}{p^2}\log\left(\frac{1}{\pi}B\left(\frac{1}{2},\frac{p+1}{2}\right)\right).$$
    Set $H(p)=p^2G'(p)$ so that $G'(p)>0$ if and only if $H(p)>0$. Observe first that
    $$H(1)=\frac{1}{2}\left(\psi\left(1\right)-\psi\left(\frac{3}{2}\right)\right)-\log\left(\frac{1}{\pi}B\left(\frac{1}{2},1\right)\right)=\log\pi-1>0.$$
    Next, one computes
    $$H'(p)=\frac{p}{4}\left(\psi_1\left(\frac{p}{2}+\frac{1}{2}\right)-\psi_1\left(\frac{p}{2}+1\right)\right)$$
    where $\psi_1=\psi'$ is the trigamma function. As the trigamma function is strictly decreasing on $(0,\infty)$, it follows that $H'(p)>0$ for all $p\in[1,\infty)$. Thus $H$ is increasing, so as $H(1)>0$, it follows that $H(p)=p^2G'(p)>0$ for all $p\in[1,\infty)$.
\end{proof}

We will also need the following asymptotic expansions of $G$ and its first three derivatives.

\begin{lemma}\label{lemma:asymptotic-of-G}
    It holds that
    $$G(p)=\frac{1}{2p}\log\frac{2}{\pi p}+O\left(\frac{1}{p^2}\right),\quad G'(p)=-\frac{1}{2p^2}\log\frac{2}{\pi p}-\frac{1}{2p^2}+O\left(\frac{1}{p^3}\right),$$
    $$G''(p)=\frac{1}{p^3}\log\frac{2}{\pi p}+\frac{3}{2p^3}+O\left(\frac{1}{p^4}\right),\quad G'''(p)=-\frac{3}{p^4}\log\frac{2}{\pi p}-\frac{11}{2p^4}+O\left(\frac{1}{p^5}\right)$$
    as $p\to\infty$.
\end{lemma}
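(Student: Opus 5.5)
Write $L(p)=\log\bigl(\frac1\pi B(\frac12,\frac{p+1}{2})\bigr)$, so that $G(p)=L(p)/p$. The plan is to obtain an asymptotic expansion of $L$ and its derivatives, and then get the expansions of $G^{(k)}$ by the Leibniz rule.

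The first step is to rewrite the beta function through gamma functions:
$$B\bigl(\tfrac12,\tfrac{p+1}{2}\bigr)=\frac{\Gamma(\frac12)\Gamma(\frac{p+1}{2})}{\Gamma(\frac p2+1)}=\sqrt\pi\,\frac{\Gamma(\frac{p+1}{2})}{\Gamma(\frac p2+1)}.$$
By the standard gamma ratio asymptotics, $\Gamma(z+a)/\Gamma(z+b)=z^{a-b}(1+O(1/z))$ with $z=p/2$, $a=\frac12$, $b=1$. This gives
$$\frac1\pi B=\frac{1}{\sqrt\pi}\Bigl(\frac p2\Bigr)^{-1/2}(1+O(1/p)),\qquad L(p)=\tfrac12\log\frac{2}{\pi p}+O(1/p).$$
Dividing by $p$ yields the claimed expansion of $G$.

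For the derivatives I would not differentiate the $O$-term blindly. Instead I would use exact formulas for $L^{(k)}$, $k\ge1$:
$$L'(p)=\tfrac12\Bigl(\psi\bigl(\tfrac{p+1}{2}\bigr)-\psi\bigl(\tfrac p2+1\bigr)\Bigr),$$
together with $L''$ and $L'''$, which are the analogous differences of $\psi_1$ and $\psi_2$ with factors $\frac14$ and $\frac18$. The known asymptotics $\psi(x)=\log x-\frac1{2x}+O(x^{-2})$ and $\psi_k(x)=(-1)^{k+1}(k-1)!\,x^{-k}+O(x^{-k-1})$ apply. Since the two arguments differ by $\frac12$, a mean value or Taylor step gives
$$\psi\bigl(\tfrac{p+1}{2}\bigr)-\psi\bigl(\tfrac p2+1\bigr)=-\tfrac12\psi_1(p/2)+O(p^{-2})=-\tfrac1p+O(p^{-2}).$$
Hence $L'(p)=-\frac1{2p}+O(p^{-2})$. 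Likewise $L''(p)=\frac1{2p^2}+O(p^{-3})$ and $L'''(p)=-\frac1{p^3}+O(p^{-4})$, which are consistent with formally differentiating $-\frac12\log p$.

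Finally I would apply Leibniz to $G=L\cdot p^{-1}$:
$$G'=\frac{L'}{p}-\frac{L}{p^2},\qquad G''=\frac{L''}{p}-\frac{2L'}{p^2}+\frac{2L}{p^3},\qquad G'''=\frac{L'''}{p}-\frac{3L''}{p^2}+\frac{6L'}{p^3}-\frac{6L}{p^4}.$$
Substituting $L=\frac12\log\frac{2}{\pi p}+O(1/p)$ and the expansions above gives the following.

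For $G'$: $-\frac1{2p^2}-\frac{1}{2p^2}\log\frac{2}{\pi p}$.

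For $G''$: $\frac1{2p^3}+\frac1{p^3}+\frac1{p^3}\log\frac{2}{\pi p}=\frac{3}{2p^3}+\frac1{p^3}\log\frac{2}{\pi p}$.

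For $G'''$: $-\frac1{p^4}-\frac3{2p^4}-\frac3{p^4}-\frac3{p^4}\log\frac{2}{\pi p}=-\frac{11}{2p^4}-\frac3{p^4}\log\frac{2}{\pi p}$.

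All errors are of the stated orders. These match the claim.

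The main obstacle is justifying the derivative asymptotics rigorously, that is, making sure the error terms are differentiable with the expected orders. Working with the exact polygamma expressions avoids this: each of the needed polygamma asymptotics is a standard consequence of the Stirling series, or of integral representations such as $\psi_k(x)=(-1)^{k+1}\int_0^\infty t^k e^{-xt}/(1-e^{-t})\,\dd t$.
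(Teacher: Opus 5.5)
Your proposal is correct and follows essentially the same route as the paper: rewrite $B$ through Gamma functions, use exact polygamma formulas for the derivatives of $\log\bigl(\tfrac1\pi B(\tfrac12,\tfrac{p+1}{2})\bigr)$ together with the standard $\log\Gamma$/polygamma asymptotics, and combine. Your bookkeeping via $G=L/p$, the Leibniz rule, and the mean-value step for the polygamma differences just spells out the ``simplifying'' that the paper leaves to the reader, and all the resulting coefficients, including the $-\tfrac{11}{2}$, are correct.
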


\begin{proof}
    The result readily follows by differentiating $G$ three times, using the identity $B(x,y)=\Gamma(x)\Gamma(y)/\Gamma(x+y)$ together with standard asymptotic expansions of $\log\Gamma$ and its first three derivatives (e.g., \cite{DLMF}*{\href{https://dlmf.nist.gov/5.11.E1}{(5.11.1)}, \href{https://dlmf.nist.gov/5.11.E2}{(5.11.2)}, \href{https://dlmf.nist.gov/5.15.E9}{(5.15.9)}}), and simplifying.
\end{proof}

The main part of showing that we can choose $m$ and $n$ with the property we want comes from the following lemma about the function $G$.

\begin{lemma}\label{lemma:G-fraction-inequality-not-at-one}
    Let $1\leq p_1<p<p_2<\infty$. Then there exists a $t_0>0$ such that
    $$\frac{G(tp)-G(tp_1)}{G(p)-G(p_1)}-\frac{G(tp_2)-G(tp)}{G(p_2)-G(p)}>\frac{1}{t}$$
    for all $t\geq t_0$.
\end{lemma}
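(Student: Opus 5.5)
The plan is to extract the leading asymptotics in $t$ of both difference quotients from \cref{lemma:asymptotic-of-G}. This reduces the claim to a $t$-independent inequality, which I then expect to be a convexity statement already implied by the proof of \cref{lemma:G-strictly-increasing}.

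\textbf{Step 1: asymptotics.} By \cref{lemma:G-strictly-increasing} the denominators $a=G(p)-G(p_1)$ and $b=G(p_2)-G(p)$ are strictly positive constants. For fixed $x\in\{p_1,p,p_2\}$, \cref{lemma:asymptotic-of-G} gives
$$G(tx)=\frac{1}{2tx}\left(\log\frac{2}{\pi}-\log x-\log t\right)+O\left(\frac{1}{t^2}\right),$$
with implied constants depending only on $p_1,p_2$. Subtracting, we obtain
$$G(tp)-G(tp_1)=\frac{\log t}{2t}\left(\frac{1}{p_1}-\frac{1}{p}\right)+O\left(\frac1t\right),\qquad G(tp_2)-G(tp)=\frac{\log t}{2t}\left(\frac{1}{p}-\frac{1}{p_2}\right)+O\left(\frac1t\right).$$
Hence the left-hand side of the claimed inequality equals $\frac{\log t}{2t}\,\kappa+O(1/t)$, where
$$\kappa=\frac{1/p_1-1/p}{G(p)-G(p_1)}-\frac{1/p-1/p_2}{G(p_2)-G(p)}.$$
If $\kappa>0$, then $\frac{\log t}{2t}\kappa-\frac{C}{t}>\frac1t$ for all sufficiently large $t$, and this gives $t_0$.

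\textbf{Step 2: $\kappa>0$ via convexity.} Substitute $y=1/x$, write $y_1=1/p_1>y=1/p>y_2=1/p_2$, and set $F(y)=G(1/y)$ on $(0,1]$; $F$ is decreasing. Define the chord slopes
$$s_1=\frac{F(y_1)-F(y)}{y_1-y}<0,\qquad s_2=\frac{F(y)-F(y_2)}{y-y_2}<0.$$
Then $\kappa=-1/s_1+1/s_2$. Since both slopes are negative, $\kappa>0$ is equivalent to $s_1>s_2$. This is exactly what strict convexity of $F$ gives, because the chord on the right interval $[y,y_1]$ then has larger slope than the chord on $[y_2,y]$.

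\textbf{Step 3: strict convexity of $F$.} I expect this to be the only real obstacle, but the proof of \cref{lemma:G-strictly-increasing} already handles it. There $H(x)=x^2G'(x)$, so
$$F'(y)=-\frac{1}{y^2}G'(1/y)=-H(1/y).$$
Differentiating once more,
$$F''(y)=\frac{1}{y^2}H'(1/y)=x^2H'(x),\qquad x=1/y\ge1.$$
The proof of \cref{lemma:G-strictly-increasing} shows $H'(x)>0$ on $[1,\infty)$, using that the trigamma function is strictly decreasing. Hence $F''>0$, so $F$ is strictly convex and $\kappa>0$.

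As a consistency check, the large-$x$ asymptotics of \cref{lemma:asymptotic-of-G} give $x^4G''(x)+2x^3G'(x)\sim x^2/2>0$, matching the sign found above. The remaining care is only to keep the $O(1/t)$ error terms uniform, which is automatic since $p_1,p,p_2$ are fixed.
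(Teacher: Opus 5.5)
Your proposal is correct and follows the paper's proof essentially step for step: the same $\frac{\log t}{2t}\kappa+O(1/t)$ asymptotic reduction, then strict convexity of $G(1/r)$. Your identity $F''(y)=x^2H'(x)$ is the paper's formula $R''(r)=\frac{1}{4r^3}\bigl(\psi_1(\frac{1}{2r}+\frac12)-\psi_1(\frac{1}{2r}+1)\bigr)$ obtained through $H'$ from \cref{lemma:G-strictly-increasing}. The only slip is in your inessential consistency check, where $x^4G''(x)+2x^3G'(x)\sim x/2$ rather than $x^2/2$.
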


\begin{proof}
    Using \cref{lemma:asymptotic-of-G} we have that
    \begin{multline*}
    \frac{G(tp)-G(tp_1)}{G(p)-G(p_1)}-\frac{G(tp_2)-G(tp)}{G(p_2)-G(p)} \\
        =\frac{\log t}{2t}\left(\frac{\frac{1}{p_1}-\frac{1}{p}}{G(p)-G(p_1)}-\frac{\frac{1}{p}-\frac{1}{p_2}}{G(p_2)-G(p)}\right)+O\left(\frac{1}{t}\right)\quad\text{as}\quad t\to\infty.
    \end{multline*}
    We see from this that it suffices to show that
    $$\frac{G(p_2)-G(p)}{\frac{1}{p}-\frac{1}{p_2}}>\frac{G(p)-G(p_1)}{\frac{1}{p_1}-\frac{1}{p}},$$
    and for this it suffices to show that the function $R$ defined by $R(r)=G(1/r)$ for $r\in(0,1]$ is strictly convex. Differentiating $R$ using the same identities as in the proof of \cref{lemma:G-strictly-increasing}, we can compute
    $$R''(r)=\frac{1}{4r^3}\left(\psi_1\left(\frac{1}{2r}+\frac{1}{2}\right)-\psi_1\left(\frac{1}{2r}+1\right)\right),$$
    and so the strict convexity of $R$ follows from the fact that the trigamma function is strictly decreasing on $(0,\infty)$.
\end{proof}

The final thing we need concerns the uniqueness of the maximal value of the norm $\norm{T_{n,m}}_{p\to p}$ as a function of $p$ for sufficiently large $m$. For convenience, we will write
$$F_{n,m}(p)=\norm{T_{n,m}}_{p\to p}$$
for this function. It should be noted that $F_{n,m}$ is continuous on $[1,\infty]$ and smooth on $(1,\infty)$ as a consequence of \cref{lemma:norm-of-phi}.

\begin{lemma}\label{lemma:existence-of-m}
    There exists a positive integer $m_0$ such that the following hold for all positive integers $n$ and $m$ with $m\geq m_0$:
    \begin{enumerate}[(i)]
        \item If $F'_{n,m}(p')>0$ for some $1<p'<\infty$, then $F'_{n,m}(p)>0$ for all $p\in(1,p']$.
        \item If $F'_{n,m}(p')<0$ for some $1<p'<\infty$, then $F'_{n,m}(p)<0$ for all $p\in[p',\infty)$.
        \item If $F'_{n,m}(p')=0$ for some $1<p'<\infty$, then $F'_{n,m}(p)>0$ for all $p\in(1,p')$, and $F'_{n,m}(p)<0$ for all $p\in(p',\infty)$.
    \end{enumerate}
\end{lemma}

\begin{proof}
    As $F_{n,m}$ is always positive, we have that $F'_{n,m}$ and $(\log F_{n,m})'$ have the same sign. Compute
    $$(\log F_{n,m})'(p)=m^2G'(mp)-nG'(p)$$
    and set
    $$h_m(p)=\frac{G'(mp)}{G'(p)}$$
    and note that, as $G'$ is a positive function by \cref{lemma:G-strictly-increasing}, we have that $F_{n,m}'(p)=0$ if and only if $h_m(p)=n/m^2$, that $F_{n,m}'(p)>0$ if and only if $h_m(p)>n/m^2$, and that $F_{n,m}'(p)<0$ if and only if $h_m(p)<n/m^2$. We thus see that it suffices to show that there exists some positive integer $m_0$ such that $h_m'(p)<0$ for all integers $m\geq m_0$ and all $1<p<\infty$. By differentiating $h_m$ one sees that $h_m'(p)<0$ if and only if
    \begin{equation}\label{eq:G-mp-p-inequality}
    \frac{mpG''(mp)}{G'(mp)}<\frac{pG''(p)}{G'(p)}.
    \end{equation}
    Set $g(p)=pG''(p)/G'(p)$. Differentiating $g$ using \cref{lemma:asymptotic-of-G} and simplifying, we can expand
    $$g'(p)=\frac{1}{G'(p)^2}\left(-\frac{1}{4p^5}+O\left(\frac{\log p}{p^6}\right)\right)\quad\text{as}\quad p\to\infty.$$
    In particular, we can find a $p_0\geq 1$ such that $g'(p)<0$ for all $p\geq p_0$. We claim next that $g(p)>-2$ for all $p\geq1$. Consider $H(p)=p^2G'(p)$. It was shown in the proof of \cref{lemma:G-strictly-increasing} that $H'(p)>0$ for all $p\geq1$. In particular, as
    $$(g(p)+2)pG'(p)=p^2G''(p)+2pG'(p)=H'(p),$$
    we see from this, together with \cref{lemma:G-strictly-increasing}, that $g(p)>-2$ for all $p\geq1$. Next, we claim that $g(p)\to-2$ as $p\to\infty$. Indeed, using \cref{lemma:asymptotic-of-G}, we have that
    $$g(p)=\frac{1+O\left(\frac{1}{\log p}\right)}{-1/2+O\left(\frac{1}{\log p}\right)}\to-2\quad\text{as}\quad p\to\infty.$$
    Set now $C=\min_{p\in[1,p_0]}g(p)$. As $g(p)\to-2$ as $p\to\infty$, we can find an integer $m_0\geq2$ such that $-2<g(p)<C$ for all $p\geq m_0$. Take now any integer $m\geq m_0$ and any $p\geq 1$. If $p\in[1,p_0]$, then $g(p)\geq C>g(mp)$ by construction, and if $p\geq p_0$, then $g(p)>g(mp)$ as $g$ is decreasing on $[p_0,\infty)$. This shows \cref{eq:G-mp-p-inequality} for all $m\geq m_0$ and all $1<p<\infty$, and so $h'_m(p)<0$ for all $m\geq m_0$ and all $1<p<\infty$. The result follows.
\end{proof}

We shall in particular need the following consequence of the above lemmas.

\begin{lemma}\label{lemma:F-larger-at-given-point}
    For any relatively open and bounded subinterval $I$ of $[1,\infty)$ and any $p'\in I$, there exist positive integers $n$ and $m$ such that
    $$F_{n,m}(p')>\sup_{p\in[1,\infty]\setminus I}F_{n,m}(p).$$
\end{lemma}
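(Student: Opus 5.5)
The plan is to reduce the supremum over $[1,\infty]\setminus I$ to the values at the two endpoints of $I$ using the unimodality from \cref{lemma:existence-of-m}. The two endpoint inequalities then become conditions on the ratio $n/m$, and \cref{lemma:G-fraction-inequality-not-at-one} guarantees an admissible integer $n$.

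Write $p_1<p_2$ for the endpoints of $I$, so that $[1,\infty]\setminus I\subseteq[1,p_1]\cup[p_2,\infty]$. When $I=[1,p_2)$, the first piece is absent.

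\textbf{Reduction to the endpoints.} Suppose $m\geq m_0$ from \cref{lemma:existence-of-m}, and suppose we have arranged $F_{n,m}(p')>F_{n,m}(p_2)$.
\begin{itemize}
\item By the mean value theorem there is a point in $(p',p_2)$ where $F_{n,m}'<0$.
\item By \cref{lemma:existence-of-m}(ii) (together with (iii) if $F'_{n,m}$ vanishes somewhere), $F_{n,m}$ is strictly decreasing on $[p_2,\infty)$.
\item By continuity of $F_{n,m}$ on $[1,\infty]$, this gives $\sup_{[p_2,\infty]}F_{n,m}=F_{n,m}(p_2)$.
\end{itemize}
Symmetrically, if $F_{n,m}(p')>F_{n,m}(p_1)$ with $p_1<p'$, then by \cref{lemma:existence-of-m}(i) $F_{n,m}$ is increasing on $(1,p_1]$, so $\sup_{[1,p_1]}F_{n,m}=F_{n,m}(p_1)$. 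It therefore suffices to find $n$ and $m\geq m_0$ with $F_{n,m}(p')>F_{n,m}(p_1)$ (when relevant) and $F_{n,m}(p')>F_{n,m}(p_2)$.

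\textbf{Translating into conditions on $n/m$.} Use $\log F_{n,m}(p)=mG(mp)-nG(p)+(m-n)\log 2$. Since $G$ is strictly increasing by \cref{lemma:G-strictly-increasing}, the two inequalities are equivalent to
$$a_m:=\frac{G(mp_2)-G(mp')}{G(p_2)-G(p')}<\frac{n}{m}<\frac{G(mp')-G(mp_1)}{G(p')-G(p_1)}=:b_m.$$
In the main case $1\leq p_1<p'<p_2<\infty$, apply \cref{lemma:G-fraction-inequality-not-at-one} with $t=m$ and take $m\geq\max(m_0,t_0)$. This gives $b_m-a_m>1/m$, so the open interval $(ma_m,mb_m)$ has length greater than $1$ and contains an integer $n$. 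That integer is positive because $a_m>0$ by monotonicity of $G$. This settles the main case.

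\textbf{Degenerate case.} If $I=[1,p_2)$ and $p'=1$, only the $p_2$-condition is needed. That condition is just $n/m>a_m$, so fix any $m\geq m_0$ and take $n$ large.

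I expect no real obstacle, since the heavy lifting sits in \cref{lemma:G-fraction-inequality-not-at-one} and \cref{lemma:existence-of-m}. The only delicate point is the endpoint bookkeeping: using continuity at $p=\infty$ and the possibly absent left piece, and making sure $F_{n,m}$ is actually monotone on each complementary piece rather than merely controlled at the endpoints.
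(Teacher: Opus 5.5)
Your proposal is correct and follows essentially the same route as the paper. You reduce the supremum to the endpoint values via the mean value theorem and \cref{lemma:existence-of-m}, rewrite the endpoint inequalities as $a_m<n/m<b_m$, and apply \cref{lemma:G-fraction-inequality-not-at-one} with $t=m\geq\max\{t_0,m_0\}$ to obtain a positive integer $n$; the case $I=[1,p_2)$, $p'=1$ is handled separately by taking $n$ large, just as the paper does (your appeal to part (iii) is unnecessary, since (ii) alone gives strict decrease on $[p_2,\infty)$).
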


\begin{proof}
    Let $m_0$ be the integer obtained by \cref{lemma:existence-of-m}. Suppose first that $I=[1,p_0)$ for some $p_0>1$ and that $p'=1$. Using \cref{lemma:G-strictly-increasing} and writing
    $$\log F_{n,m_0}(1)-\log F_{n,m_0}(p_0)=m_0(G(p_0)-G(1))\left(\frac{n}{m_0}-\frac{G(m_0p_0)-G(m_0)}{G(p_0)-G(1)}\right),$$
    we see that if we choose $n$ sufficiently large, then $F_{n,m_0}(1)>F_{n,m_0}(p_0)$; fix such an $n$. The mean value theorem and \cref{lemma:existence-of-m} then imply that $F_{n,m_0}$ is strictly decreasing on $[p_0,\infty]$, and so
    $$F_{n,m_0}(1)>F_{n,m_0}(p_0)=\sup_{p\in[p_0,\infty]}F_{n,m_0}(p).$$
    This shows the result in this case. Suppose next that $I$ is any relatively open and bounded subinterval of $[1,\infty)$ with endpoints $1\leq p_1<p_2<\infty$ and that $p'\in(p_1,p_2)$. Use \cref{lemma:G-fraction-inequality-not-at-one} to find a $t_0>0$ such that
    $$\frac{G(tp')-G(tp_1)}{G(p')-G(p_1)}-\frac{G(tp_2)-G(tp')}{G(p_2)-G(p')}>\frac{1}{t}$$
    for all $t\geq t_0$. Fix an integer $m\geq\max\{t_0,m_0\}$ and observe that the interval
    $$\left(m\frac{G(mp_2)-G(mp')}{G(p_2)-G(p')},m\frac{G(mp')-G(mp_1)}{G(p')-G(p_1)}\right)$$
    has length strictly greater than $1$, and so we can find an integer $n$ inside of it. Note that $n$ is a positive integer as a consequence of \cref{lemma:G-strictly-increasing}. Using \cref{lemma:G-strictly-increasing} we can also compute
    $$\log F_{n,m}(p')-\log F_{n,m}(p_1)=m(G(p')-G(p_1))\left(\frac{G(mp')-G(mp_1)}{G(p')-G(p_1)}-\frac{n}{m}\right)>0,$$
    and
    $$\log F_{n,m}(p')-\log F_{n,m}(p_2)=m(G(p_2)-G(p'))\left(\frac{n}{m}-\frac{G(mp_2)-G(mp')}{G(p_2)-G(p')}\right)>0,$$
    so that
    $$F_{n,m}(p')>\max\{F_{n,m}(p_1),F_{n,m}(p_2)\}.$$
    Finally, using the mean value theorem together with \cref{lemma:existence-of-m} since $m\geq m_0$, we have that $F_{n,m}$ is increasing on $[1,p_1]$ and decreasing on $[p_2,\infty]$, so that
    \[F_{n,m}(p')>\max\{F_{n,m}(p_1),F_{n,m}(p_2)\}\geq\sup_{p\in[1,\infty]\setminus I}F_{n,m}(p).\qedhere\]
\end{proof}

With these lemmas at hand, we can now perform the main part of the proof, which is the following lemma.

\begin{lemma}\label{lemma:main-lemma}
    For any relatively open and bounded subinterval $I$ of $[1,\infty)$ and any $p'\in I$, there exists a linear operator $T$ such that the following properties hold:
    \begin{enumerate}[(i)]
        \item $T$ is densely defined on $H^p(\bT^\infty)$ for all $1\leq p\leq\infty$.
        \item $T$ extends to a bounded operator on $H^p(\bT^\infty)$ for all $p\in[1,\infty]\setminus I$ with
        $$\sup_{p\in[1,\infty]\setminus I}\norm{T}_{p\to p}<\infty.$$
        \item $T$ is unbounded on $H^p(\bT^\infty)$ for all $p$ in some relatively open neighborhood of $p'$ in $I$.
        \item $T$ annihilates constants.
    \end{enumerate}
\end{lemma}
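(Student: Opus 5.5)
Fix $n,m$ from \cref{lemma:F-larger-at-given-point} and normalize $S=T_{n,m}/M$, where $M=\sup_{p\in[1,\infty]\setminus I}F_{n,m}(p)$. This supremum is finite because $F_{n,m}$ is continuous on $[1,\infty]$. After the normalization, $\norm{S}_{p\to p}\le1$ for all $p\notin I$ and $\norm{S}_{p'\to p'}=\rho>1$. By continuity of $F_{n,m}$ there is a relatively open neighbourhood $U\subseteq I$ of $p'$ on which $\norm{S}_{p\to p}>\rho_0$ for some fixed $\rho_0>1$.

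The plan is to tensorize. Write $S_j$ for the copy of $S$ acting on the $2n$ variables of the $j$-th block of $\bT^\infty$: $S_j$ uses $\chi_{2n(j-1)+1},\dots,\chi_{2nj}$ for the functional $\Phi_n$, and it outputs $\varphi^m$ in two variables of that same block. The output variables may coincide with the input variables, since $S$ composes $\fA$-type projection with multiplication and the output depends only on those coordinates. For $k\ge1$ set
$$U_k=S_1\otimes S_2\otimes\cdots\otimes S_k,$$
which acts on functions of the first $2nk$ variables after applying $\fA_{2nk}$. Concretely,
$$U_kf=2^{-nk}M^{-k}\langle f,\Phi_{nk}\rangle\,\prod_{j=1}^k\varphi_j^m,$$
where $\varphi_j$ is the $j$-th shifted copy of $\varphi$. \cref{lemma:dual-norm} gives the functional norm as a product, and the $L^p$ norm of a product of independent variables factors, so
$$\norm{U_k}_{p\to p}=\norm{S}_{p\to p}^k$$
exactly. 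To make later pieces act on disjoint variables, I conjugate each $U_k$ by a shift so that its $2nk$ variables are disjoint from those of the other $U_{k'}$; call the shifted operator $V_k$. Then define
$$T=\sum_{k\ge1}k^{-2}V_k.$$
On polynomials (or finite linear combinations of characters) only finitely many terms are nonzero, so $T$ is densely defined on $H^p$ for $p<\infty$ and weak-$\ast$ densely defined for $p=\infty$. $T$ annihilates constants because $\langle1,\Phi_{nk}\rangle=0$. For $p\notin I$, the triangle inequality gives $\norm{T}_{p\to p}\le\sum_k k^{-2}<\infty$ uniformly in $p$, which proves (ii). I expect the boundedness on $H^\infty$ and $H^1$ to need the same estimate, and it holds there too since $\norm{U_k}\le1$ for every $p\notin I$, including $p=1$ and $p=\infty$.

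For (iii), fix $p\in U$ and test $T$ on the extremal function for $U_k$, placed in the variables of $V_k$. Since the variable sets are disjoint and no term annihilates the others' constants in a harmful way, this needs a little care: the function must be chosen to lie in the span of characters of the $k$-th block with no constant term, so that $V_{k'}f=0$ for $k'\ne k$. This works because $V_{k'}$ only sees $f$ through $\fA$ onto its own variables, which yields a constant, and constants are annihilated. Then
$$\norm{Tf}_p=k^{-2}\norm{V_kf}_p\ge k^{-2}\rho_0^k\norm{f}_p(1-\varepsilon)\to\infty.$$
The main obstacle is exactly this isolation step, namely ensuring that near-extremizers of $U_k$ can be taken without constant term and that the disjoint-shift construction keeps the norm identity. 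I would handle the first part by noting that subtracting the mean changes $\langle f,\Phi_{nk}\rangle$ by nothing and changes $\norm{f}_p$ by at most a factor of $2$, which is harmless against exponential growth.
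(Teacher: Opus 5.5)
Your proposal is correct and follows essentially the same route as the paper: normalize $T_{n,m}$ by its supremum $M$ off $I$, tensorize $k$ copies on disjoint blocks of $2n$ variables so that the norm becomes exactly $\norm{T_{n,m}}_{p\to p}^k/M^k$, and sum these with weights $k^{-2}$. The only divergence is your mean-subtraction step in (iii), which is harmless but unnecessary: as you yourself observe, each $V_{k'}$ with $k'\neq k$ sees a function of the $k$-th variable block only through its mean and annihilates constants, so it kills such functions whether or not they have a constant term.
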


\begin{proof}
    Use \cref{lemma:F-larger-at-given-point} to find positive integers $n$ and $m$ such that
    \begin{equation}\label{eq:chosen-F-inequality}
        F_{n,m}(p')>\sup_{p\in[1,\infty]\setminus I}F_{n,m}(p)
    \end{equation}
    and consider the operator
    $$R=\frac{T_{n,m}}{\sup_{p\in[1,\infty]\setminus I}F_{n,m}(p)}.$$
    Then $R$ is a bounded operator on $H^p(\bT^\infty)$ for all $1\leq p\leq\infty$. Observe first that $p\mapsto\norm{R}_{p\to p}$ is continuous, so as $\norm{R}_{p'\to p'}>1$ by \cref{eq:chosen-F-inequality}, we can find a relatively open neighborhood $J$ of $p'$ in $I$ and a $\rho>0$ such that
    $$\norm{R}_{p\to p}\geq1+\rho$$
    for all $p\in J$. Furthermore it is clearly also the case that $\norm{R}_{p\to p}\leq1$ for all $p\in[1,\infty]\setminus I$. Next, observe that, as $\fA_d$ is self-adjoint and as $\fA_{2n}\Phi_n=\Phi_n$ $\fA_2\varphi^m=\varphi^m$, we have that $R=\fA_{2n}R\fA_{2n}$, so we may equivalently consider $R$ as an operator $R:H^p(\bT^{2n})\to H^p(\bT^{2n})$ for all $p\in[1,\infty]$. Decompose
    $$\bT^\infty=\bT^{2n}_1\times\bT^{2n}_2\times\cdots$$
    where $\bT^{2n}_k$ contains the variables $\chi_{2n(k-1)+1},\cdots\chi_{2nk}$, and define the operator $R_k$ by letting $R$ act on each of the tori $\bT^{2n}_{(k-1)k/2+1},\dots,\bT^{2n}_{k(k+1)/2}$ independently. Then
    $$\norm{R_k}_{p\to p}=\norm{R}^k_{p\to p}$$
    for each $p\in[1,\infty]$. Define the operator $T$ formally by
    $$Tf=\sum_{k=1}^\infty\frac{R_k f}{k^2}.$$
    If $p\in[1,\infty]\setminus I$ and $f\in H^p(\bT^\infty)$, then
    $$\sum_{k=1}^\infty\frac{\norm{R_k f}_p}{k^2}\leq\sum_{k=1}^\infty\frac{1}{k^2}\norm{f}_p=\frac{\pi^2}{6}\norm{f}_p,$$
    so $T$ is well-defined on $H^p(\bT^\infty)$ and defines a bounded operator of norm at most $\pi^2/6$ there. Observe next that if $f$ is a polynomial, then $R_kf=0$ for all sufficiently large $k$ as each $R_k$ acts on different variables and annihilates constants, and so $Tf$ is necessarily a polynomial. As the polynomials are dense in $H^p(\bT^\infty)$ for all $1\leq p<\infty$, we see thus that $T$ is densely defined on $H^p(\bT^\infty)$ for all $p\in[1,\infty]$. Finally, as $R_k$ annihilates constants since $f\mapsto\langle f,\Phi_n\rangle$ clearly does and as each $R_k$ acts on separate variables, we have that
    $$\norm{T}_{p\to p}\geq\frac{\norm{R_k}_{p\to p}}{k^2}\geq\frac{(1+\rho)^k}{k^2}$$
    for all $p\in J$ and all positive integers $k$. Letting $k\to\infty$ we obtain that $T$ is unbounded on $H^p(\bT^\infty)$ for all $p\in J$. Finally, that $T$ annihilates constants is clear, as each $R_k$ does.
\end{proof}

With this lemma at hand, we can now prove our main result.

\begin{proof}[Proof of \cref{theorem:main-theorem}]
    Set $B=[1,\infty)\setminus A$. Suppose first that $B$ is non-empty. As $B$ is a relatively open subset of $[1,\infty)$, we can find a sequence $\{I'_n\}_{n\in\bZ^+}$ of relatively open and bounded subintervals of $[1,\infty)$ contained in $B$ such that
    $$B=\bigcup_{n\in\bZ^+}I'_n.$$
    For any $p\in B$, find an $m(p)\in\bZ^+$ such that $p\in I'_{m(p)}$, and apply \cref{lemma:main-lemma} to find a linear operator $T_{p}$ with the properties listed in the lemma for the interval $I'_{m(p)}$ and the point $p$. In particular, for each $p\in B$, we can find a relatively open neighborhood $J_{p}$ of $p$ in $I'_{m(p)}$ such that $T_{p}$ is unbounded on $H^q(\bT^\infty)$ for all $q\in J_{p}$. Then $\{J_{p}\}_{p\in B}$ is a relatively open cover of $B$, and so, as $B$ can be written as a countable union of compact sets, we can find a countable subcover $\{J_n\}_{n\in\bZ^+}$, where we write $J_n=J_{p_n}$ for the corresponding $p_n\in B$, and similarly write $I_n=I'_{m(p_n)}$. Fix a bijection $\kappa:\bZ^+\times\bZ^+\to\bZ^+$ and decompose
    $$\bT^\infty=\bT^\infty_1\times\bT^\infty_2\times\cdots,$$
    where $\bT^\infty_n$ contains the variables $\chi_{\kappa(n,1)},\chi_{\kappa(n,2)},\dots$. Let $T_n$ be the operator obtained by letting $T_{p_n}$ act on the torus $\bT^\infty_n$ and formally define
    $$Tf=\sum_{n=1}^\infty\frac{T_nf}{n^2c_n}$$
    where $c_n=\sup_{p\in[1,\infty]\setminus I_n}\norm{T_n}_{p\to p}$. Note that if $p\in A\cup\{\infty\}$ and $f\in H^p(\bT^\infty)$, then
    $$\sum_{n=1}^\infty\frac{\norm{T_nf}_p}{n^2c_n}\leq\sum_{n=1}^\infty\frac{1}{n^2}\norm{f}_p=\frac{\pi^2}{6}\norm{f}_p,$$
    so that $T$ defines a bounded operator on $H^p(\bT^\infty)$ of norm at most $\pi^2/6$. This also implies that $T$ is densely defined on $H^p(\bT^\infty)$ for all $1\leq p\leq\infty$. Next, we claim that $T$ is unbounded on $H^p(\bT^\infty)$ for all $p\in [1,\infty)\setminus A$. Fix $p\in [1,\infty)\setminus A=B$. By construction we can then find an $n\in\bZ^+$ such that $p\in J_n$. As $T_n$ is unbounded on $H^p(\bT^\infty)$ and annihilates constants, and since each $T_k$ acts on different variables, it then follows that
    $$\norm{T}_{p\to p}\geq\frac{\norm{T_n}_{p\to p}}{n^2c_n}=\infty,$$
    and so $T$ is unbounded on $H^p(\bT^\infty)$. If $\infty\in A$, then we are done, and $T$ is the desired operator, so suppose $\infty\notin A$. By \cite{anderson_integral_2014}*{Proposition 2.14}, we can find some $g\in H^\infty(\bD)$ such that the Volterra operator $V_g$, defined by
    \begin{equation}\label{eq:volterra}
        V_gf(z)=\int_0^z f(\zeta)g'(\zeta)\,\dd\zeta,\quad z\in\bD,
    \end{equation}
    is unbounded on $H^\infty(\bD)$. It also follows from \cite{aleman_integral_1995}*{Theorem 1} that $V_g$ is bounded on $H^p(\bD)$ for all $1\leq p<\infty$. Observe also that if $P$ is a polynomial on $\bD$, then by integrating by parts in \cref{eq:volterra} shows that $V_gP\in H^\infty(\bD)$. Identifying $H^p(\bD)$ and $H^p(\bT)$, we have that
    $$T+V_g\fA_1$$
    is the desired operator. Finally, if $B$ was empty, then we may take $T=0$ as our operator if $\infty\in A$, and $T=V_g\fA_1$, with $g$ as above, if $\infty\notin A$.
\end{proof}

\section{Consequences for the finite-dimensional tori}\label{section:finite-dimensional-consequences}

Our method for proving \cref{theorem:main-theorem} also allows us to say something about the growth of the best interpolation constant for $H^p(\bT^d)$ as $d\to\infty$. For a positive integer $d$ and numbers $1\leq p_1<p<p_2\leq\infty$, we will write
\begin{multline*}
C_d(p_1,p,p_2)=\sup\{\norm{T}_{p\to p}:T\text{ is linear and bounded on }H^{p_j}(\bT^d)\\\text{ with }\norm{T}_{p_j\to p_j}\leq 1\text{ for }j=1,2\}.
\end{multline*}
From the construction used to prove \cref{theorem:main-theorem}, we can then deduce the following.

\begin{theorem}\label{theorem:asymptotic-of-finite-interpolation-constant}
    For all $1<p_1<p<p_2<\infty$ there exists an $\alpha>0$ such that
    $$C_d(p_1,p,p_2)=e^{(\alpha+o(1))d}$$
    as $d\to\infty$.
\end{theorem}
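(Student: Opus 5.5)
The plan is to show that $d\mapsto \log C_d(p_1,p,p_2)$ is superadditive and bounded above by a linear function of $d$, and bounded below by a linear function with positive slope. Fekete's lemma then gives $\frac1d\log C_d(p_1,p,p_2)\to\alpha:=\sup_d \frac1d\log C_d(p_1,p,p_2)$, with $0<\alpha<\infty$, which is exactly the statement $C_d=e^{(\alpha+o(1))d}$.

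\emph{Superadditivity.} Let $T$ be admissible on $\bT^{d_1}$ and $S$ admissible on $\bT^{d_2}$. I would consider $T\otimes S$ on $H^q(\bT^{d_1+d_2})$ and factor it as $(T\otimes I)(I\otimes S)$. For $q=p_1,p_2$, I would bound $\norm{T\otimes I}_{q\to q}\le\norm{T}_{q\to q}\le 1$ by Fubini: for a polynomial $f$, each slice $f(\cdot,\eta)$ lies in $H^q(\bT^{d_1})$, and $T$ acts slice by slice. Density of polynomials (recall $q<\infty$) then extends this bound. The same argument applies to $I\otimes S$, so $T\otimes S$ is admissible. At the exponent $p$, testing on product functions $f(\chi)g(\eta)$ gives $\norm{T\otimes S}_{p\to p}\ge\norm{T}_{p\to p}\norm{S}_{p\to p}$. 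Taking suprema yields $C_{d_1+d_2}\ge C_{d_1}C_{d_2}$. The one technical point here, which I expect to be the main (though mild) obstacle, is justifying the slice-wise action for general $f$, not only for polynomials. I would handle it by defining everything on polynomials first and extending by density.

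\emph{Positivity of $\alpha$.} I would apply \cref{lemma:F-larger-at-given-point} to the interval $I=(p_1,p_2)$ and the point $p'=p$. This yields $n,m$ with $F_{n,m}(p)>\max\{F_{n,m}(p_1),F_{n,m}(p_2)\}$. As in the proof of \cref{lemma:main-lemma}, the normalized operator $T_{n,m}/\max\{F_{n,m}(p_1),F_{n,m}(p_2)\}$ acts on $H^q(\bT^{2n})$, has norm at most $1$ at $p_1$ and $p_2$, and has norm strictly greater than $1$ at $p$. Hence $C_{2n}>1$, and therefore $\alpha\ge\frac{1}{2n}\log C_{2n}>0$.

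\emph{Finiteness of $\alpha$.} This is where the hypothesis $1<p_1$, $p_2<\infty$ is used. The Riesz projection $P_d:L^q(\bT^d)\to H^q(\bT^d)$ is the $d$-fold tensor power of the one-variable Riesz projection, so $\norm{P_d}_{q\to q}\le c_q^d$ for $1<q<\infty$, where $c_q$ is the one-variable norm. Given an admissible $T$, the composition $TP_d$ is bounded on $L^{p_j}$ with norm at most $c_{p_j}^d$. By Riesz--Thorin, $\norm{TP_d}_{L^p\to L^p}\le \max\{c_{p_1},c_{p_2}\}^d$. Since $P_d$ is the identity on $H^p(\bT^d)$, this gives $\norm{T}_{p\to p}\le e^{Kd}$ with $K=\log\max\{c_{p_1},c_{p_2}\}$. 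In particular every $C_d$ is finite, so Fekete's lemma applies, and the limit satisfies $\alpha\le K<\infty$.
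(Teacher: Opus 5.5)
Your proposal is correct and follows essentially the same route as the paper: supermultiplicativity of $C_d$ via tensor products of admissible operators plus Fekete's lemma, finiteness of $\alpha$ via Riesz--Thorin applied to $T$ composed with the $d$-dimensional Riesz projection, and positivity of $\alpha$ via the normalized $T_{n,m}$ acting on $H^q(\bT^{2n})$ (the paper's operator $R$ from the proof of \cref{lemma:main-lemma} is the same object). The differences are cosmetic: you spell out the Fubini argument that the tensor product is admissible, which the paper takes for granted, and your Riesz--Thorin bound $\max\{c_{p_1},c_{p_2}\}^d$ is slightly cruder than the paper's $(c_{p_1}^{1-\theta}c_{p_2}^{\theta})^d$ but equally sufficient.
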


\begin{proof}
    Fix $1<p_1<p<p_2<\infty$. We claim first that
    $$\alpha=\sup_{d\in\bZ^+}\frac{1}{d}\log C_d(p_1,p,p_2)=\lim_{d\to\infty}\frac{1}{d}\log C_d(p_1,p,p_2)$$
    with the limit existing (possibly as $\infty$). By Fekete's lemma (e.g., \cite{steele_probability_1997}*{Lemma 1.2.1}), it suffices to show that $d\mapsto C_d(p_1,p,p_2)$ is supermultiplicative. Fix positive integers $d_1,d_2$ and let $T_1,T_2$ be admissible operators for $C_{d_1}(p_1,p,p_2)$ and $C_{d_2}(p_1,p,p_2)$ respectively. By letting $T$ be the operator on $H^p(\bT^{d_1+d_2})$ given by letting $T_1$ act on the first $d_1$ variables and $T_2$ act on the second $d_2$ variables, we have that $T$ is an admissible operator for $C_{d_1+d_2}(p_1,p,p_2)$. Consequently
    $$C_{d_1+d_2}(p_1,p,p_2)\geq\norm{T}_{p\to p}=\norm{T_1}_{p\to p}\norm{T_2}_{p\to p}.$$
    As $T_1$ and $T_2$ were arbitrary, the supermultiplicativity of $d\mapsto C_d(p_1,p,p_2)$ follows, so by Fekete's lemma, the claimed limit exists. We next claim that $\alpha<\infty$. Let $d$ be any positive integer and let $T$ be an admissible operator for $C_d(p_1,p,p_2)$. Applying the Riesz--Thorin interpolation theorem (e.g., \cite{bergh_interpolation_1976}*{Theorem 1.1.1}) to the operator $TP_d^+$, where $P_d^+$ is the Riesz projection on $\bT^d$, and noting that $\norm{P_d^+}_{p_j\to p_j}=\norm{P_1^+}_{p_j\to p_j}^d$, we can estimate
    $$\norm{T}_{p\to p}\leq\norm{TP_d^+}_{p\to p}\leq\left(\norm{P_1^+}_{p_1\to p_1}^{1-\theta}\norm{P_1^+}_{p_2\to p_2}^\theta\right)^d$$
    where $\theta\in(0,1)$ satisfies $1/p=(1-\theta)/p_1+\theta/p_2$. As $P_1^+$ is bounded on $L^p(\bT)$, it readily follows that $\alpha<\infty$. Finally, to see that $\alpha>0$, let $R$ be as in the proof of \cref{lemma:main-lemma} with $I=(p_1,p_2)$ and $p'=p$. Then $R$ is admissible for the constant $C_d(p_1,p,p_2)$ for some $d$, and so, as $\norm{R}_{p'\to p'}>1$, we have that
    $$\alpha\geq\frac{1}{d}\log C_d(p_1,p,p_2)\geq\frac{1}{d}\log\norm{R}_{p'\to p'}>0.$$
    The result follows.
\end{proof}

\subsection*{Acknowledgments.}
I would like to thank my PhD supervisor, Ole Fredrik Brevig, for his insightful comments and discussions during the writing of this paper.

\subsection*{AI Declaration.}
During the preparation of this manuscript, the author used OpenAI's ChatGPT 5.6 Plus as a tool for literature search, proofreading, mathematical discussion, and feedback on correctness and possible minor improvements. The main use for ChatGPT was to assist in the special function arguments used to prove the required properties of the norm of $T_{n,m}$, identifying the Volterra operator with the desired properties, and suggesting using Fekete's lemma in \cref{theorem:asymptotic-of-finite-interpolation-constant} to improve an earlier version of the result. In particular, the idea to consider the operator $T_{n,m}$ and varying $n$ and $m$ to get the desired norm inequality is the author's own. The manuscript has been written and verified by the author, and the author takes full responsibility for its content.

\bibliography{references}

\end{document}